\numberwithin{equation}{section} \theoremstyle{plain}
\newtheorem{theorem}{Theorem}[section]
\newtheorem{corollary}[theorem]{Corollary}
\newtheorem{proposition}[theorem]{Proposition}
\newtheorem{lemma}[theorem]{Lemma}
\theoremstyle{definition}
\newtheorem{definition}{Definition}[section]
\newtheorem{example}{Example}[section]
\theoremstyle{remark}
\newtheorem{remark}{\rm\bf Remark}[section]
\begin{document}

\title{On the $q$-moment determinacy of probability distributions}
\author{Sofiya Ostrovska and Mehmet Turan}
\date{}
\maketitle

\begin{center}
{\it Atilim University, Department of Mathematics,  Incek  06836, Ankara, Turkey}\\
{\it e-mail: sofia.ostrovska@atilim.edu.tr, mehmet.turan@atilim.edu.tr}\\
{\it Tel: +90 312 586 8211,  Fax: +90 312 586 8091}
\end{center}

\begin{abstract}

 Given $0<q<1,$ every absolutely continuous distribution can be described in two different ways: in terms of a probability density function and also  in terms of a $q$-density. Correspondingly, it has a sequence of moments and a sequence of $q$-moments if those exist. In this article, new conditions on the $q$-moment determinacy of probability distributions are derived. In addition, results related to the comparison of the properties of probability distributions with respect to the moment and $q$-moment determinacy are presented.
\end{abstract}

{\bf Keywords}: $q$-density, $q$-moment, moment problem, $q$-moment (in)determinacy, analytic function

{\bf 2010 MSC:} 60E05, 30E05, 05A30, 62E10

\section{Introduction}

Due to the popularity of the $q$-calculus, numerous $q$-analogues of classical probability distributions have emerged,
both for discrete and absolutely continuous cases. For example, there are $q$-binomial, $q$-Poisson, $q$-exponential, $q$-Erlang and other $q$-distributions. These distributions play a significant role not only in the $q$-calculus itself, but also in various applications, primarily in theoretical physics. See, for example, \cite{biederhan, charal, jing}. Comprehensive information concerning $q$-distributions is presented in \cite{charal}, and in this article we follow the terminology and exposition of the monograph. Throughout the paper, all random variables are taken to be non-negative and $0<q<1.$ Also, the $q$-integral defined by Jackson for $0<a<b$ as 
\begin{align*}
\int_0^a f(t) d_qt = a(1-q)\sum_{j=0}^{\infty} f(aq^j)q^j, \quad 
\int_a^b f(t) d_qt=\int_0^b f(t) d_qt - \int_0^a f(t) d_qt
\end{align*} will be used along with the improper $q$-integral on $[0,+\infty)$ defined as
\begin{align}\label{qimpr}
\int_0^\infty f(t) d_qt = (1-q)\sum_{j=-\infty}^{\infty} f(q^j)q^j.
\end{align}
See \cite[Sec. 19]{quantum}. 
\begin{definition} \cite{charal} Let $X$ be a random variable with distribution $P_X$ and distribution function $F_X.$ 
A function $f(t),$ $t>0,$ is a {\it $q$-density} of $X$ if 
\begin{align}
F_X(x)=\int_0^x f(t) d_qt, \quad x>0. \label{qdens}
\end{align}
Correspondingly, the $n$-th order $q$-moment of $X$ is
\begin{align}
m_q(n; X):=m_q(n;f):=\int_0^\infty t^nf(t) d_qt, \quad k\in{\mathbb N}_0. \label{qmom}
\end{align}
\end{definition}

 It has to be mentioned here that if $P_X$ has a $q$-density $f$, then $f$ is the $q$-derivative of the distribution function $F_X,$ that is,
\begin{align}\label{qder}f(t)=D_qF_X(t):=\frac{F_X(t)-F_X(qt)}{t(1-q)}, \quad t>0.\end{align} It is known (\cite[Theorem 20.1]{quantum}) that if $X\geqslant 0$ and $F_X$ is continuous at 0, then $F_X$ can be represented in the form \eqref{qdens} and, therefore, possesses a $q$-density.

The moment problem for the $q$-moments in terms of $q$-densities has been considered in 
\cite{stapro}. 
Since the $q$-moments depend only on the values a $q$-density on the sequence $\{q^j\}_{j\in{\mathbb Z}},$ it is reasonable, therefore, to consider the following equivalence relation for functions on $(0, \infty):$
$$
f\sim g \Leftrightarrow f(q^j)=g(q^j), \quad j\in{\mathbb Z}.
$$
 Notice that $q$-moments $m_q(n;f)$ may also be obtained as the moments of a discrete distribution concentrated on $\{q^j\}_{j\in{\mathbb Z}},$ whose probability mass function is given as:$${\mathbf{P}}\{X=q^j\}=f(q^j)q^j(1-q),\quad j\in \mathbb{Z}.$$ The moment problem for such discrete distributions was investigated in \cite{berg} by C. Berg, who found explicitly infinite families of distributions all possessing the same moments of all orders.  These families can also be viewed as discrete Stieltjes classes, although the name `Stieltjes class' was suggested by J. Stoyanov (\cite{jap}) a few years after \cite{berg} had been published.  

\begin{definition}\cite{stapro}
A distribution $P_X$ of a random variable $X$ possessing a $q$-density $f$ is moment determinate if $m_q(k;X)=m_q(k;Y)$ for all $k\in \mathbb{N}_0$   implies that $f_X \sim f_Y.$ Otherwise, $P_X$ is $q$-moment indeterminate.
\end{definition}
It should be pointed out that every absolutely continuous distribution possessing finite moments of all orders can be examined from two different perspectives: those of moment determinacy and $q$-moment determinacy. 

In \cite{stapro}, some conditions have been provided both for $q$-moment determinacy and indeterminacy in terms of the values $f(q^{-j}).$ More precisely, it has been proved that 
\begin{enumerate}
\item [(i)] if 
\begin{align}\label{B}
f(q^{-j})=o(q^{j(j+1)/2}), \quad j\to\infty,
\end{align}
then $P_X$ is $q$-moment determinate;
\item [(ii)] if 
\begin{align}\label{C}
f(q^{-j}) \geqslant Cq^{j(j+1)/2}, \quad j\geqslant 0,
\end{align}
then $P_X$ is $q$-moment indeterminate.
\end{enumerate} 
Statement (i) implies immediately that if a $q$-density $f$ has a bounded support, then the distribution $P_X$ is $q$-moment determinate. 

In this work, new results on $q$-moment (in)determinacy are presented, both in terms of $q$-moments and $q$-density itself. Alternatively, it can be stated that some `checkable' conditions for $q$-moment (in)determinacy are given. For the classical moment problem, an extensive review of such conditions can be found in \cite{recent}. To illustrate the difference between the notions of moment and $q$-moment determinacy, examples of probability distributions which are moment indeterminate but 
 at the same time
$q$-moment determinate are provided. The exact relation between the two notions is yet to be described. As a first attempt, the  outcomes connecting these two aspects are presented in Propositions \ref{propcon} and \ref{propqexp}. 

The $q$-analogue of the exponential function
\begin{align*}
e_q(t)=\prod_{j=0}^\infty \left(1-t(1-q)q^j\right)^{-1}
\end{align*}
is used in the paper. For ample information on $e_q(t),$ we refer to \cite[Section 1]{charal} and \cite[Section 9]{quantum}. The $q$-exponential function is involved in the $q$-density of the $r$-stage Erlang distribution of the first kind with parameter $\lambda >0:$
\begin{align}
f_r(t)=\frac{q^{r(r-1)/2}\lambda^r}{[r-1]_q !} \, t^{r-1} e_q(-\lambda t), \quad t>0. \label{erl}
\end{align}
See \cite[formula (2.24)]{charal}. A stochastic process leading to this distribution as well as some of its properties have been studied in \cite{kyriakoussis}. When $r=1,$ one recovers the $q$-exponential distribution with parameter $\lambda >0,$ whose density is:
\begin{align}
f(t)=\lambda e_q(-\lambda t), \quad t>0. \label{qexp}
\end{align}
See \cite[Corollary 2.1, p. 77]{charal}. It will be shown that the $q$-moment determinacy of a $q$-Erlang distribution depends on the values of $\lambda$ and $r.$ Observe that for the classical $r$-stage Erlang distribution this is not the case as it is moment determinate for all $\lambda >0$ and $r\in{\mathbb N}.$ This uncovers the difference between the problems of moment and $q$-moment determinacy.

For the sequel, we need the well-known Euler's identity \cite[Section 9]{quantum}:
\begin{align}
\prod_{j=0}^{\infty}(1+q^j t)=\sum_{j=0}^{\infty} \frac{q^{j(j-1)/2}}{(q;q)_j}\, t^j, \quad t\in{\mathbb C} \label{eu}
\end{align}
where $(q;q)_j$ is the $q$-shifted factorial defined by:
\begin{align*}
(a;q)_0:=1, \quad (a;q)_j=\prod_{s=0}^{j-1}(1-aq^s), \quad a\in{\mathbb C}.
\end{align*}
The following estimate proved in \cite[formula (2.6)]{zeng} holds for some positive constants $C_1,$ $C_2$ and $t$ large enough:
\begin{align}
C_1 \exp\left\{\frac{\ln^2 t}{2\ln(1/q)}+\frac{\ln t}{2}\right\} \leqslant 
\prod_{j=0}^{\infty}(1+q^j t) \leqslant 
C_2 \exp\left\{\frac{\ln^2 t}{2\ln(1/q)}+\frac{\ln t}{2}\right\}. \label{zeng}
\end{align}
Throughout the paper, the letter $C$ with or without an index stands for a positive constant whose exact value does not have to be specified. Additionally, the notation $M(r;f):=\max_{|z|=r} |f(z)|$, where $f(z)$  is a function analytic in $\{z:|z|=r\}$ will be used repeatedly.

\section{Statement of results}
We start with the assertion providing an analogue of condition \eqref{B} proved in \cite[Theorem 2.4]{stapro} in terms of $q$-moments.
\begin{proposition}\label{prop1}
Let $P_X$ have a $q$-density $f$ and $m_q(n;f)$ be finite $q$-moments for all $n\in{\mathbb N}.$
If 
\begin{align}
\limsup_{n\to\infty} \frac{\ln m_q(n;f)}{n^2} = A < \frac{\ln(1/q)}{2}, \label{A}
\end{align}
then $P_X$ is $q$-moment determinate.
\end{proposition}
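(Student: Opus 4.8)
The plan is to deduce from the $q$-moment growth condition \eqref{A} the pointwise decay estimate \eqref{B} for the $q$-density, and then to apply the sufficient condition (i) for $q$-moment determinacy, i.e.\ \cite[Theorem 2.4]{stapro}. Hence everything reduces to showing that \eqref{A} implies $f(q^{-j})=o\bigl(q^{j(j+1)/2}\bigr)$ as $j\to\infty$.

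The first step is a simple but decisive lower bound for the $q$-moments in terms of a single value of $f$. Since $f$ is a $q$-density, \eqref{qder} shows $f=D_qF_X\geqslant 0$, because $F_X$ is non-decreasing. Thus, by \eqref{qimpr}, every term of the series
\begin{align*}
m_q(n;f)=\int_0^\infty t^n f(t)\,d_qt=(1-q)\sum_{k=-\infty}^{\infty}f(q^k)\,q^{k(n+1)}
\end{align*}
is non-negative, and, for a fixed integer $j\geqslant 0$, keeping only the summand at the lattice point $q^{-j}$ yields
\begin{align*}
m_q(n;f)\geqslant (1-q)\,f(q^{-j})\,q^{-j(n+1)},\qquad\text{equivalently}\qquad f(q^{-j})\leqslant\frac{m_q(n;f)}{1-q}\,q^{\,j(n+1)},
\end{align*}
and this is valid for \emph{every} $n\in{\mathbb N}$.

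The second step is to optimise this family of inequalities over $n$. Write $b:=\ln(1/q)$ and fix a constant $a$ with $A<a<b/2$ (we may also take $a>0$, which is harmless since $\tfrac12\ln(1/q)>0$). By \eqref{A} there is an $N$ with $\ln m_q(n;f)\leqslant a n^2$ for all $n\geqslant N$, so for every $j$ with $f(q^{-j})>0$ one has
\begin{align*}
\ln f(q^{-j})\leqslant C+a n^2-j(n+1)b,\qquad n\geqslant N,
\end{align*}
with $C=\ln\tfrac{1}{1-q}$. As a function of the real variable $n$, the right-hand side is a parabola minimised at $n^\ast=jb/(2a)$; taking for $n$ the nearest integer $n_j$ — which exceeds $N$ once $j$ is large, since $n^\ast\to\infty$ — raises the bound by at most $a/4$. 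This gives
\begin{align*}
\ln f(q^{-j})\leqslant -\frac{b^2}{4a}\,j^2+O(j),\qquad j\to\infty.
\end{align*}
Because $\ln\bigl(q^{j(j+1)/2}\bigr)=-\tfrac{b}{2}\,j(j+1)$, the difference $\ln f(q^{-j})-\ln\bigl(q^{j(j+1)/2}\bigr)$ is at most $\bigl(\tfrac b2-\tfrac{b^2}{4a}\bigr)j^2+O(j)$, whose leading coefficient $\tfrac{b(2a-b)}{4a}$ is negative since $a<b/2$. Hence $f(q^{-j})/q^{j(j+1)/2}\to 0$, which is exactly \eqref{B}, and statement (i) yields the $q$-moment determinacy of $P_X$.

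I do not expect a genuine obstacle here: the argument is just the non-negativity of a $q$-density combined with a quadratic optimisation in $n$. The only point needing care is that the exponent threshold $j(j+1)/2$ in \eqref{B} is matched, after this optimisation, by the threshold $\tfrac12\ln(1/q)$ in \eqref{A}: a bound of order $a n^2$ on $\ln m_q(n;f)$ turns into a bound of order $b^2 j^2/(4a)$ on $-\ln f(q^{-j})$, and this dominates $\tfrac b2 j^2$ precisely when $a<b/2$, i.e.\ under the hypothesis $A<\tfrac12\ln(1/q)$. Note that the argument in fact delivers much more than \eqref{B}: $f(q^{-j})$ is smaller than $q^{j(j+1)/2}$ by a factor decaying like $\exp(-c\,j^2)$ for some $c>0$.
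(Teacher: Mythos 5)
Your argument is correct, but it follows a genuinely different route from the paper. You reduce the moment condition \eqref{A} to the pointwise decay condition \eqref{B}: since $f(q^k)\geqslant 0$ at every lattice point (because $f=D_qF_X$ and $F_X$ is non-decreasing), retaining the single term at $q^{-j}$ gives $f(q^{-j})\leqslant (1-q)^{-1}m_q(n;f)\,q^{j(n+1)}$ for every $n$, and optimising the exponent $an^2-j(n+1)\ln(1/q)$ over $n$ (with $A<a<\tfrac12\ln(1/q)$) yields $\ln f(q^{-j})\leqslant -\tfrac{\ln^2(1/q)}{4a}j^2+O(j)$, which beats $q^{j(j+1)/2}$ by a factor $\exp(-cj^2)$ precisely because $a<\tfrac12\ln(1/q)$; then you invoke the known criterion (i), i.e.\ \cite[Theorem 2.4]{stapro}. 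The computation is sound (the nearest-integer correction of at most $a/4$ and the requirement $n_j\geqslant N$ for large $j$ are handled correctly), so Proposition \ref{prop1} becomes a corollary of the earlier pointwise criterion. The paper instead argues directly by contradiction: if $g\not\sim f$ had the same $q$-moments, the Laurent series difference $\phi$ would vanish at all points $q^{-n}$, so Lemma \ref{lem1} forces $M(q^{-n};\phi)\geqslant C\exp\{\tfrac{n^2\ln(1/q)}{2}-\tfrac{n\ln(1/q)}{2}\}$, while the moment bound gives $M(q^{-n};\phi)\leqslant 2m_q(n-1;f)/(1-q)$, contradicting \eqref{A}. Your version is more elementary at this level (a Chebyshev-type bound plus quadratic optimisation, no complex analysis beyond what is hidden in the cited theorem) and it shows the stronger fact that \eqref{A} implies \eqref{B} with a quantitative $\exp(-cj^2)$ margin; the paper's version is self-contained modulo Lemma \ref{lem1} and uses the same maximum-modulus machinery that underlies its indeterminacy results, which keeps the exposition uniform.
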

\begin{remark} If $A=\ln(1/q)/2,$ then the distribution $P_X$ may be either $q$-moment determinate of $q$-moment indeterminate. Hence, the bound $A$ in \eqref{A} cannot be improved. This will be illustrated in Example \ref{exABC}.
\end{remark}

To establish conditions for $q$-moment indeterminacy, we have to impose some restrictions on the behaviour of a $q$-density. The following statement holds.
\begin{theorem}\label{thm2}
Let $\{m_q(n;f)\}_{n=1}^{\infty}$ be a sequence of $q$-moments of a distribution $P_X.$ If 
\begin{align}
\frac{\ln(1/q)}{2} < \limsup_{n\to\infty} \frac{\ln m_q(n;f)}{n^2} =A < \infty, \label{beta}
\end{align}
and
\begin{align}
f(q^{-j-1}) f(q^{-j+1}) \leqslant [f(q^{-j})]^2, \quad j\geqslant 0, \label{lc}
\end{align}
then $P_X$ is $q$-moment indeterminate.
\end{theorem}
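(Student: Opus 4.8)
\emph{The plan.} I would prove $q$-moment indeterminacy by producing a second $q$-density $g\not\sim f$ with $m_q(n;g)=m_q(n;f)$ for every $n\in\mathbb{N}_{0}$, as an explicit perturbation of $f$ on the grid $\{q^{j}\}_{j\in\mathbb{Z}}$ built from Euler's identity \eqref{eu}. Since the factor with index $m=n$ vanishes, $\sum_{k\geq0}\frac{(-1)^{k}q^{k(k-1)/2}}{(q;q)_{k}}\,q^{-kn}=(q^{-n};q)_{\infty}=\prod_{m\geq0}\bigl(1-q^{m-n}\bigr)=0$ for all $n\geq0$; hence, for a small parameter $\varepsilon$, the grid function
$$
g(q^{-k})=f(q^{-k})+\frac{\varepsilon\,(-1)^{k}q^{k(k-1)/2}}{q^{-k}(1-q)(q;q)_{k}}\ \ (k\geq0),\qquad g(q^{j})=f(q^{j})\ \ (j\geq1)
$$
has exactly the same $q$-moments as $f$ (the case $n=0$ preserves the total $q$-mass) and differs from $f$ at $q^{0}$. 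It is a genuine $q$-density — so $P_X$ is $q$-moment indeterminate — as soon as $g\geq0$ on the grid; and since $0<(q;q)_{\infty}<(q;q)_{k}<1$, and since \eqref{beta}--\eqref{lc} force $\psi_{k}:=f(q^{-k})>0$ for all large $k$ (a final segment of vanishing $\psi_{k}$ would give $\ln m_{q}(n;f)=O(n)$, contradicting \eqref{beta}), this holds — after shifting the support of the perturbation past any initial index with $\psi_{k}=0$ — once
$$
f(q^{-k})\geq C\,q^{k(k+1)/2}\qquad\text{for all large }k\text{ of one fixed parity},\qquad C>0 .
$$
So everything reduces to this one-sided form of condition \eqref{C}, which must be squeezed out of \eqref{beta} and \eqref{lc}.

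\emph{The estimate.} Because the $j\geq0$ part of $\int_{0}^{\infty}t^{n}f\,d_{q}t$ is bounded by the total $q$-mass, $m_{q}(n;f)=(1-q)\sum_{k\geq1}q^{-k(n+1)}\psi_{k}+O(1)$ as $n\to\infty$. Hypothesis \eqref{lc} is exactly log-concavity of $\{\psi_{k}\}$, hence of $\{q^{-k(n+1)}\psi_{k}\}_{k}$; bounding the two tails of this log-concave sequence via Euler's identity and Zeng's estimate \eqref{zeng} gives $\ln m_{q}(n;f)=M_{n}+O(\log n)$ with $M_{n}:=\max_{k\geq1}\bigl(k(n+1)\ln(1/q)+\ln\psi_{k}\bigr)$, attained at some $k=k(n)$. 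Now \eqref{beta} enters twice. The upper bound $m_{q}(n;f)\leq e^{(A+\delta)n^{2}}$ for all large $n$ yields, on optimising in $n$, the decay estimate $\psi_{k}\leq\exp\{-\tfrac{\ln^{2}(1/q)}{4(A+\delta)}k^{2}+O(k)\}$, whose quadratic coefficient is strictly below $\tfrac12\ln(1/q)$ since $A>\tfrac12\ln(1/q)$. The lower bound $m_{q}(n_{i};f)\geq e^{(A-\delta)n_{i}^{2}}$ along a subsequence makes the peak term $q^{-k(n_{i})(n_{i}+1)}\psi_{k(n_{i})}$ of size at least $e^{(A-\delta)n_{i}^{2}}$, from which — using $A>\tfrac12\ln(1/q)$ once more — one gets $\psi_{k_{i}}\geq q^{(\frac12-\eta)k_{i}^{2}}$ for a fixed $\eta>0$ along $k_{i}:=k(n_{i})\to\infty$.

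\emph{The hard part.} What is left is to promote the subsequential bound $\psi_{k_{i}}\geq q^{(\frac12-\eta)k_{i}^{2}}$ to $\psi_{k}\geq C\,q^{k(k+1)/2}$ for \emph{every} large $k$ of one parity, and this is where \eqref{lc} pulls its weight a second, decisive time: $-\ln\psi_{k}$ is convex in $k$, lies above the quadratic envelope coming from the decay estimate, and along $k_{i}$ is at most $(\tfrac12-\eta)\ln(1/q)k_{i}^{2}+O(k_{i})$; convexity then traps $-\ln\psi_{k}$ below the piecewise-linear interpolant of those values, and one must verify this interpolant does not overshoot $\tfrac12\ln(1/q)k^{2}+O(k)$. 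The obstacle I expect to be the crux is exactly this: the $\limsup$ in \eqref{beta} may be attained only along a thin sequence of $n$'s, hence of $k_{i}$'s, and then the interpolant can rise above $\tfrac12\ln(1/q)k^{2}$ between consecutive $k_{i}$. Defeating this requires controlling the spacing of the peak indices — showing $k(n)$ grows essentially linearly in $n$, so that consecutive $k_{i}$ stay within a bounded multiple of each other — or, equivalently, running the convexity argument simultaneously from both ends of $[k_{i},k_{i+1}]$ and balancing it against the quadratic lower bound on $-\ln\psi_{k}$; making that uniform in the possible sparsity of the subsequence is the technical heart of the proof.
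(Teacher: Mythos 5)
Your overall strategy --- reduce the theorem to the pointwise lower bound \eqref{C} (your Euler-identity perturbation is essentially a re-proof of the known criterion (ii) from \cite{stapro}) and then try to squeeze \eqref{C} out of \eqref{beta} and \eqref{lc} --- is the same reduction the paper makes. But the step you yourself call ``the technical heart'' is genuinely missing, and it does not come for free. What you actually establish is: an upper bound $-\ln f(q^{-k})\geqslant \frac{\ln^2(1/q)}{4(A+\delta)}\,k^2-O(k)$ valid for all $k$, and a lower bound $-\ln f(q^{-k_i})\leqslant \left(\tfrac12-\eta\right)\ln(1/q)\,k_i^2+O(k_i)$ only along the peak indices $k_i$. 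Convexity of $k\mapsto-\ln f(q^{-k})$ (which is \eqref{lc}) controls intermediate $k$ by the chord between $k_i$ and $k_{i+1}$; but if $k_{i+1}/k_i$ is unbounded, the chord near $k\approx k_{i+1}/2$ is about $2\left(\tfrac12-\eta\right)\ln(1/q)\,k^2$, which overshoots the needed $\tfrac12\ln(1/q)\,k^2+O(k)$ whenever $\eta<\tfrac14$ --- and $\eta$ is small exactly when $A$ is close to $\ln(1/q)/2$, the regime the theorem must cover (one has roughly $\tfrac12-\eta=\ln(1/q)/(4A)$). So the naive interpolation does fail, and the additional input you gesture at (control of the spacing of the $k_i$, or a two-sided convexity argument uniform in the sparsity of the subsequence) is never supplied. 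As it stands, the proposal proves the conclusion only under an extra hypothesis of type \eqref{C} along a parity class, i.e.\ it assumes what remains to be shown.

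The paper closes precisely this gap by importing a theorem of Boicuk and Eremenko (Theorem \ref{thmbe}): if $\sum_k c_kz^k$ is entire with log-concave coefficients and $\limsup_{r\to\infty}\ln M(r)/\ln^2 r=\beta<\infty$, then \emph{every} coefficient satisfies $|c_k|\geqslant\exp\{-k^2/(4\beta)\}$. Applied to $\psi_2(z)=\sum_{j\geqslant0}f(q^{-j})z^j$, with the observation $m_q(n;f)=(1-q)\psi(q^{-(n+1)})$ translating \eqref{beta} into $\beta=A/\ln^2(1/q)$, this gives $f(q^{-j})\geqslant\exp\{-j^2\ln^2(1/q)/(4A)\}\geqslant q^{j^2/2}\geqslant q^{j(j+1)/2}$, the hypothesis $A>\ln(1/q)/2$ being used only in the last comparison; then \eqref{C} yields indeterminacy. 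In other words, the coefficient lower bound from a $\limsup$ growth condition plus log-concavity --- exactly the statement you identified as the crux and left open --- is the key lemma; your plan becomes a proof only by reproving or citing it.
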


\begin{remark}
Condition \eqref{lc} shows that the sequence $\{f(q^{-j})/f(q^{-(j+1)})\}$ is non-decreasing, that is the sequence $\{f(q^{-j})\}$ is log-concave. The logarithmic concavity plays an important role in the study of probability distributions. 
\end{remark}
The next result provides a condition for the $q$-moment indeterminacy in the situations not covered   by the  outcomes of Proposition \ref{prop1} and Theorem \ref{thm2}.

\begin{theorem}\label{thm3} Let $f(t)$ be a $q$-density of a random variable $X$ and $m\in{\mathbb N}.$ If 
\begin{align}
f(q^{-mj}) \geqslant C q^{mj(j+1)/2}, \quad j \geqslant 0, \label{mj}
\end{align}
then the distribution $P_X$ is $q$-moment indeterminate.
\end{theorem}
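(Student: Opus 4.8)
The plan is to prove indeterminacy by exhibiting a companion probability distribution $P_Y$ possessing a $q$-density $f_Y$ with $f_Y\not\sim f$ and $m_q(n;f_Y)=m_q(n;f)$ for all $n\in{\mathbb N}_0$ (as usual we assume the $q$-moments of $f$ finite). Since by \eqref{qmom} a $q$-density influences the $q$-moments only through its values on the lattice $\{q^i\}_{i\in{\mathbb Z}}$, it is enough to produce a signed function $h$ on this lattice such that every $q$-moment of $h$ vanishes, $h\not\equiv 0$, and $f+\varepsilon h\geqslant 0$ on the lattice for some $\varepsilon>0$. The hypothesis \eqref{mj} dictates that $h$ must be supported on the sub-lattice $\{q^{-mj}:j\geqslant 0\}$, because that is the only place where we are handed a lower bound on $f$ large enough to absorb the perturbation.

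First I would set $h(q^i)=0$ for $i\notin\{-mj:j\geqslant 0\}$ and $h(q^{-mj})=(-1)^j q^{mj(j+1)/2}/(q^m;q^m)_j$ for $j\geqslant 0$. Writing $a_j:=h(q^{-mj})q^{-mj}=(-1)^j q^{mj(j-1)/2}/(q^m;q^m)_j$, Euler's identity \eqref{eu}, applied with $q$ and $t$ replaced by $q^m$ and $-z$, gives $\sum_{j\geqslant 0}a_j z^j=\prod_{s=0}^{\infty}(1-zq^{ms})=:\Phi(z)$, an entire function. Then for each $n\in{\mathbb N}_0$,
\[
m_q(n;h)=(1-q)\sum_{j\geqslant 0}h(q^{-mj})q^{-mj(n+1)}=(1-q)\sum_{j\geqslant 0}a_j\,(q^{-mn})^j=(1-q)\,\Phi(q^{-mn}),
\]
where the series converges absolutely because its $j$-th term is $O\!\left(q^{mj(j-1)/2-mjn}\right)$, and $\Phi(q^{-mn})=\prod_{s=0}^{\infty}(1-q^{m(s-n)})=0$ since the factor with $s=n$ vanishes. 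Hence $m_q(n;h)=0$ for every $n\in{\mathbb N}_0$; moreover $h(1)=1\neq 0$, so $h\not\equiv 0$.

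Next I would check that $g:=f+\varepsilon h$ is non-negative on the lattice. Off the sub-lattice $g=f\geqslant 0$; at $q^{-mj}$ with $j$ even, $g(q^{-mj})\geqslant f(q^{-mj})\geqslant 0$; and at $q^{-mj}$ with $j$ odd, \eqref{mj} together with $(q^m;q^m)_j\geqslant (q^m;q^m)_\infty>0$ gives
\[
g(q^{-mj})=f(q^{-mj})-\frac{\varepsilon\,q^{mj(j+1)/2}}{(q^m;q^m)_j}\ \geqslant\ q^{mj(j+1)/2}\left(C-\frac{\varepsilon}{(q^m;q^m)_\infty}\right)\ \geqslant\ 0
\]
as soon as $0<\varepsilon\leqslant C\,(q^m;q^m)_\infty$; fix such an $\varepsilon$. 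The discrete measure $\mu=\sum_{i\in{\mathbb Z}}g(q^i)q^i(1-q)\,\delta_{q^i}$ then has non-negative masses summing, by the $n=0$ case above, to $1+\varepsilon\,m_q(0;h)=1$, and its distribution function is continuous at $0$; hence, by \cite[Theorem 20.1]{quantum}, $\mu$ is the law $P_Y$ of some $Y\geqslant 0$ admitting a $q$-density $f_Y$ with $f_Y(q^i)=g(q^i)$ for all $i$. Consequently $m_q(n;f_Y)=m_q(n;f)+\varepsilon\,m_q(n;h)=m_q(n;f)$ for every $n$, while $f_Y(1)=f(1)+\varepsilon\neq f(1)$, so $f_Y\not\sim f$; thus $P_X$ is $q$-moment indeterminate. (For $m=1$ this reproduces the implication ``\eqref{C} $\Rightarrow$ $q$-moment indeterminacy'' of \cite{stapro}.)

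I expect the real obstacle to be the balancing act in the non-negativity step: the perturbation must simultaneously have all $q$-moments equal to zero, be supported only on $\{q^{-mj}:j\geqslant 0\}$, and still be dominated there by $\varepsilon f$ with a single $\varepsilon>0$ uniform in $j$. This is exactly what forces the coefficients to be $q^{mj(j+1)/2}/(q^m;q^m)_j$ --- the numerator matches the right-hand side of \eqref{mj} term by term, and the denominator stays bounded away from $0$ --- and it is also why one uses the one-sided product $\Phi$ rather than a two-sided perturbation, which would require a lower bound on $f$ near $t=0$ not supplied by \eqref{mj}.
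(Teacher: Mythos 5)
Your proof is correct and follows essentially the same route as the paper: you perturb $f$ only on the sub-lattice $\{q^{-mj}\}_{j\geqslant 0}$ by the coefficients $(-1)^j q^{mj(j+1)/2}/(q^m;q^m)_j$, use Euler's identity \eqref{eu} to see that the perturbation's $q$-moments all vanish (your $\Phi(q^{-mn})$ is exactly the paper's $\phi_m(q^{-m(n+1)})$), and use \eqref{mj} with $(q^m;q^m)_j\geqslant(q^m;q^m)_\infty$ to keep the perturbed density non-negative. Your explicit bound $\varepsilon\leqslant C(q^m;q^m)_\infty$ and the verification that the perturbed lattice function is a genuine $q$-density are just slightly more detailed versions of steps the paper states briefly.
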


\begin{proposition}\label{propcon}
Let $X\geqslant 0$ possess an absolutely continuous distribution.    If $\{\mu_n\}_{n=1}^{\infty}$ is a sequence of moments of $X,$ and
\begin{align}
\limsup_{n\to\infty} \frac{\ln \mu_n}{n^2} = \frac{\ln(1/q_0)}{2}, \label{q1}
\end{align}
then $P_X$ is $q$-moment determinate for all $q < q_0.$ In particular, if 
\begin{align}
\limsup_{n\to\infty} \frac{\ln \mu_n}{n^2}=0, \label{q2}
\end{align}
then $P_X$ is $q$-moment determinate for all $q\in(0,1).$
\end{proposition}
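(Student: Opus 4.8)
The plan is to relate the classical moments $\mu_n$ of $X$ to its $q$-moments $m_q(n;f)$, where $f = D_q F_X$ is the $q$-density, and then invoke Proposition \ref{prop1}. The key is a growth comparison: I claim that the hypothesis \eqref{q1} forces
$$\limsup_{n\to\infty}\frac{\ln m_q(n;f)}{n^2}\leqslant \frac{\ln(1/q_0)}{2} < \frac{\ln(1/q)}{2}$$
for every $q<q_0$, so that condition \eqref{A} of Proposition \ref{prop1} is satisfied and $q$-moment determinacy follows. To make this precise, I would express $m_q(n;f)=\int_0^\infty t^n f(t)\,d_qt = (1-q)\sum_{k\in\mathbb{Z}} q^{(n+1)k} f(q^k)$ using \eqref{qimpr}, and separately write the classical moment $\mu_n=\int_0^\infty t^n\,dF_X(t)$. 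Since $f(q^k)q^k(1-q) = F_X(q^k)-F_X(q^{k+1})$ is exactly the mass the discrete distribution on $\{q^j\}_{j\in\mathbb Z}$ assigns to the point $q^k$, the $q$-moment $m_q(n;f)=\sum_{k\in\mathbb Z} q^{nk}\bigl(F_X(q^k)-F_X(q^{k+1})\bigr)$ is the $n$-th moment of a discrete distribution obtained from $P_X$ by moving each bit of mass on $(q^{k+1},q^k]$ to the point $q^k$. Because $X\geqslant 0$, this relocation only moves mass to the right by at most a factor $1/q$ on each dyadic-type block, hence $m_q(n;f)\leqslant q^{-n}\mu_n$ after a careful accounting (masses on $(q^{k+1},q^k]$ get multiplied by $q^{nk}\leqslant q^{-n}t^n$ for $t$ in that interval); mass on $(1,\infty)$ likewise gets pushed down to the nearest $q^k\geqslant 1$ but never increases, and one must also handle the tail and the accumulation point at $0$.

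Granting the estimate $m_q(n;f)\leqslant q^{-n}\mu_n$ (valid for $n$ large, possibly up to a constant factor), taking logarithms and dividing by $n^2$ gives
$$\frac{\ln m_q(n;f)}{n^2}\leqslant \frac{\ln\mu_n}{n^2} + \frac{n\ln(1/q)}{n^2} = \frac{\ln\mu_n}{n^2} + \frac{\ln(1/q)}{n},$$
and letting $n\to\infty$ yields $\limsup_n \ln m_q(n;f)/n^2 \leqslant \ln(1/q_0)/2$. Since $q<q_0$ implies $\ln(1/q)>\ln(1/q_0)$, the strict inequality $\ln(1/q_0)/2 < \ln(1/q)/2$ holds, so \eqref{A} is in force and Proposition \ref{prop1} delivers $q$-moment determinacy. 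The special case \eqref{q2} is then immediate: if the $\limsup$ equals $0$, the same computation gives $\limsup_n \ln m_q(n;f)/n^2 \leqslant 0 < \ln(1/q)/2$ for every $q\in(0,1)$.

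The main obstacle I anticipate is establishing the comparison $m_q(n;f) \leqslant C q^{-n}\mu_n$ rigorously, since it requires controlling how the $q$-discretization of $P_X$ redistributes mass — in particular the mass on $[1,\infty)$ (where $q^k\geqslant 1$ forces $q^{nk}\leqslant 1\leqslant t^n$, which is actually favorable) versus the mass on $(0,1)$ (where $q^{nk}$ can exceed $t^n$, but only by the bounded factor $q^{-n}$ per block), plus verifying that the bilateral series in \eqref{qimpr} converges, i.e.\ that the $q$-moments are finite — which one gets for free from finiteness of $\mu_n$ together with the comparison. A cleaner route, if one prefers to avoid the block-by-block bookkeeping, is to note directly that $f(q^k)q^k(1-q)\leqslant F_X(q^k) \leqslant 1$ handles the left tail $k\to+\infty$, while for the right tail $k\to-\infty$ one writes $q^{nk}f(q^k)q^k(1-q) = q^{nk}(F_X(q^k)-F_X(q^{k+1}))$ and bounds $q^{nk}\leqslant q^{-n}\cdot q^{n(k+1)}\cdot q^{-n}$... — in any case the essential inequality is elementary and the constants are harmless after dividing by $n^2$ and passing to the limit. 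Once the growth comparison is in hand, the proposition is a one-line consequence of Proposition \ref{prop1}.
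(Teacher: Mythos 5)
Your proposal is correct and follows essentially the same route as the paper: both prove the comparison $m_q(n;X)\leqslant q^{-n}\mu_n$ by writing $m_q(n;X)=\sum_{k\in\mathbb{Z}}q^{nk}\bigl(F_X(q^k)-F_X(q^{k+1})\bigr)$ and bounding the mass on each block $(q^{k+1},q^k]$ via $t^n\geqslant q^{(k+1)n}$, then conclude by Proposition \ref{prop1} since $\ln(1/q_0)/2<\ln(1/q)/2$ for $q<q_0$. The "careful accounting" you worried about is exactly this uniform per-block estimate, so no extra constant or separate tail analysis is needed.
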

It should be emphasized that condition \eqref{q2} is not conclusive to the moment (in)determinacy, while it guarantees the $q$-moment determinacy for $q\in(0,1).$ This is illustrated by Example \ref{hyper}.
The next assertion deals with the $q$-exponential distribution, which has $q$-density \eqref{qexp}.

\begin{proposition}\label{propqexp}
Let $X$ be a random variable whose distribution function is $F_X(t)=1-e_q(-\lambda t),$ $t\geqslant 0.$ Then, the distribution $P_X$ is moment indeterminate for all $\lambda >0.$
\end{proposition}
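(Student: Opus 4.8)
The plan is to pass from the distribution function to the classical probability density $f_X=F_X'$, to show that $f_X$ exhibits a lognormal-type decay at infinity, and then to conclude moment indeterminacy from a Krein-type criterion (for such criteria see the survey \cite{recent}). Since, by the definition of $e_q$,
\begin{align*}
e_q(-\lambda t)=\prod_{j=0}^{\infty}\bigl(1+\lambda(1-q)q^{j}t\bigr)^{-1},\qquad t\geqslant0,
\end{align*}
is a product of smooth decreasing functions converging uniformly on compacta, $F_X\in C^{1}(0,\infty)$ and logarithmic differentiation gives
\begin{align*}
f_X(t)=\lambda\,e_q(-\lambda t)\,S(t),\qquad S(t):=\sum_{j=0}^{\infty}\frac{(1-q)q^{j}}{1+\lambda(1-q)q^{j}t},\qquad t>0.
\end{align*}
Here $0<f_X(t)\leqslant\lambda$ for every $t>0$ (each factor of the product exceeds $1$, and $S(t)\leqslant S(0)=1$) and $f_X(0^{+})=\lambda$, so $f_X$ is a genuine density that is continuous and bounded away from $0$ on each interval $(0,b]$.

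Next I would estimate the two factors for large $t$. The reciprocal $1/e_q(-\lambda t)=\prod_{j\geqslant0}\bigl(1+\lambda(1-q)q^{j}t\bigr)$ is, up to the harmless rescaling $t\mapsto\lambda(1-q)t$, exactly the product estimated in \eqref{zeng}; hence
\begin{align*}
-\ln e_q(-\lambda t)=\ln\prod_{j=0}^{\infty}\bigl(1+\lambda(1-q)q^{j}t\bigr)=\frac{\ln^{2}t}{2\ln(1/q)}+O(\ln t),\qquad t\to\infty.
\end{align*}
For $S(t)$, splitting the series at the index $j\approx\ln(\lambda(1-q)t)/\ln(1/q)$, where $\lambda(1-q)q^{j}t\asymp1$, bounds the head by $O\!\bigl(\tfrac{\ln t}{t}\bigr)$ and the geometric tail by $O(1/t)$, while retaining only the term $j=0$ gives $S(t)\geqslant c/t$; thus $c/t\leqslant S(t)\leqslant C\ln t/t$ for $t$ large. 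Combining the two estimates,
\begin{align*}
-\ln f_X(t)\leqslant\frac{\ln^{2}t}{2\ln(1/q)}+O(\ln t),\qquad t\to\infty,
\end{align*}
and the corresponding upper bound for $f_X$ shows that every moment $\mu_n$ of $X$ is finite, so the moment problem for $X$ is well posed.

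Finally I would verify Krein's condition for the Stieltjes moment problem: $-\ln f_X\geqslant-\ln\lambda$ everywhere, $-\ln f_X$ is bounded on $(0,1]$, and for $x\to\infty$
\begin{align*}
\frac{-\ln f_X(x^{2})}{1+x^{2}}\leqslant\frac{1}{1+x^{2}}\left(\frac{2\ln^{2}x}{\ln(1/q)}+O(\ln x)\right),
\end{align*}
which is integrable because $\int^{\infty}x^{-2}\ln^{2}x\,dx<\infty$. Hence $\int_{0}^{\infty}\dfrac{-\ln f_X(x^{2})}{1+x^{2}}\,dx<\infty$, and Krein's criterion (\cite{recent}) yields that $P_X$ is moment indeterminate for every $\lambda>0$. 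I expect the only genuine work to be the sharp tail estimate $-\ln f_X(t)\sim\ln^{2}t/\bigl(2\ln(1/q)\bigr)$; once \eqref{zeng} is invoked this reduces to the elementary estimation of $S(t)$, after which the verification of Krein's integral is immediate.
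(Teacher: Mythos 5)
Your proposal is correct and follows essentially the same route as the paper: differentiate $F_X$ to obtain $\rho(t)=\lambda(1-q)\,e_q(-\lambda t)\,S(t)$, estimate $1/e_q(-\lambda t^2)$ via \eqref{zeng}, bound $S$ from below, and conclude by Krein's condition. The only (immaterial) difference is your lower bound $S(t)\geqslant c/t$ from the $j=0$ term, whereas the paper compares $S$ with $\int_0^1 \frac{du}{1+au}$ to get the sharper $S(a)\geqslant C\ln(1+a)/a$; either way $-\ln S(t^2)=O(\ln t)$, which is all the Krein integral requires.
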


Recall that it was proved in \cite[Example 2.1]{stapro} that the $q$-exponential distribution is $q$-moment determinate when $\lambda(1-q)>1$ and $q$-moment indeterminate otherwise. Juxtaposing this claim with  Propositon \ref{propqexp}, the following conclusion can be reached.

\begin{corollary} There exist absolutely continuous probability distributions which are  moment indeterminate but $q$-moment determinate.
\end{corollary}

\section{Proofs of the results}
The next lemma proved in \cite[Lemma 2.6]{stapro} will be used in the sequel.
\begin{lemma}\label{lem1} Let $\phi(z)=\sum_{j\in{\mathbb Z}} c_j z^j$ satisfy $\phi(q^{-m})=0$ for all $m\in{\mathbb N}.$ Then, for $r=q^{-m},$ one has
$$
M(r; \phi) \geqslant C \exp\left\{ \frac{\ln^2 r}{2\ln(1/q)}-\frac{\ln r}{2}\right\}.
$$
\end{lemma}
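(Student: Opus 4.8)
The plan is to bound $M(r;\phi)$ below by the circular mean of $\log|\phi|$ and then let the prescribed zeros $q^{-m}$ drive the growth of that mean. First I would note that, because $\phi(q^{-m})=0$ with $q^{-m}\to\infty$, the analytic part $\sum_{j\ge0}c_jz^j$ has infinite radius of convergence, so $\phi$ is analytic in an exterior annulus $\{\rho_1<|z|<\infty\}$ with $\rho_1<q^{-1}$; indeed each zero $q^{-m}\ge q^{-1}$ must lie in the domain of convergence. Assuming $\phi\not\equiv0$, I would fix a reference radius $\rho_0\in(\rho_1,q^{-1})$ on which $\phi$ has no zero, so that $\log|\phi|$ is finite and continuous on $|z|=\rho_0$.

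Next I would introduce
\[
m(t)=\frac{1}{2\pi}\int_0^{2\pi}\log\left|\phi(te^{i\theta})\right|\,d\theta,\qquad t>\rho_1.
\]
Since $\log|\phi|$ is subharmonic, $m$ is convex in $\log t$, and its right derivative equals $\beta+n(t)$, where $n(t)$ counts the zeros of $\phi$ in $\rho_0<|z|<t$ (with multiplicity) and $\beta$ is the constant slope just beyond $\rho_0$. Integrating in $\log t$ from $\rho_0$ to $r$ and retaining only the prescribed zeros $q^{-1},\dots,q^{-M}$ (all further zeros add nonnegative terms), I obtain for $r=q^{-M}$
\[
m(r)\ge m(\rho_0)+\beta\log\frac{r}{\rho_0}+\sum_{k=1}^{M}\log\frac{r}{q^{-k}}.
\]

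The decisive term is the last sum. With $r=q^{-M}$ one has $\log(r/q^{-k})=(M-k)\ln(1/q)$, hence
\[
\sum_{k=1}^{M}\log\frac{r}{q^{-k}}=\ln\frac1q\sum_{k=1}^{M}(M-k)=\frac{M(M-1)}{2}\ln\frac1q=\frac{\ln^2 r}{2\ln(1/q)}-\frac{\ln r}{2},
\]
where the last equality uses $M=\ln r/\ln(1/q)$. This is precisely the exponent claimed; in particular the subtractive $-\tfrac12\ln r$ arises from the $-k$ in $(M-k)$ and is thus an intrinsic feature of the zero count, not of any negative power of $z$. Because $\log M(r;\phi)\ge m(r)$, the two displays give the assertion as soon as $m(\rho_0)+\beta\log(r/\rho_0)$ is bounded below.

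Controlling this leftover is the step I expect to be the main obstacle. Here $m(\rho_0)$ is a harmless finite constant, but the slope $\beta$ is governed by the lowest-order (most negative) power in the Laurent expansion of $\phi$, and a negative $\beta$ would lower the coefficient of $\ln r$ below $-\tfrac12$. The natural benchmark is the canonical product $P(z)=\prod_{m\ge1}(1-q^m z)$, whose zeros are exactly the points $q^{-m}$: extracting the factor $1+r$ from \eqref{zeng} yields $M(r;P)\asymp\exp\{\ln^2 r/(2\ln(1/q))-\ln r/2\}$, so the bound is attained by $P$ and its leftover is of constant order (indeed $\beta=0$ for $P$, which is entire and nonvanishing at $0$). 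I would therefore finish by showing $\beta\ge0$ for the given $\phi$—equivalently, that its inner slope is at least that of $P$—so that $\beta\log(r/\rho_0)\ge0$ for $r\ge\rho_0$ and the whole leftover is absorbed into the constant $C=e^{m(\rho_0)}$; this is exactly the point where the structure of $\phi$ as a genuine Laurent series has to be used.
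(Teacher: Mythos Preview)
First, note that the paper does not actually prove this lemma; it merely quotes \cite[Lemma~2.6]{stapro} and uses the result. So there is no in-paper argument to compare yours against.

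Your Jensen--annulus approach is sound up to the point you yourself flag as ``the main obstacle,'' and it correctly isolates the dominant term $\ln^{2}r/(2\ln(1/q))$. The gap, however, is real and your proposed closure fails: one cannot deduce $\beta\ge 0$ from the stated hypotheses. Take $\phi(z)=z^{-N}P(z)$ with $P(z)=\prod_{m\ge 1}(1-q^{m}z)$ and any $N\in\mathbb{N}$. This is a Laurent series convergent in $\mathbb{C}\setminus\{0\}$ vanishing at every $q^{-m}$; since $P$ is entire with $P(0)=1$, Jensen's formula gives $m_P(t)=0$ for $t<q^{-1}$, so on a reference circle $|z|=\rho_{0}<q^{-1}$ your slope is exactly $\beta=-N$. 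Moreover
\[
M(r;\phi)=r^{-N}M(r;P)\asymp \exp\!\left\{\frac{\ln^{2}r}{2\ln(1/q)}-\Bigl(N+\tfrac12\Bigr)\ln r\right\},
\]
which for $N\ge 1$ is eventually strictly smaller than the bound the lemma asserts. Thus not only does your final step fail, but the same example shows that the literal statement---with the specific coefficient $-\tfrac12$ in front of $\ln r$ and a constant $C$ independent of $r$---cannot hold for an arbitrary Laurent series with the prescribed zeros. What your argument does deliver, without any control on $\beta$, is
\[
M(r;\phi)\ge C_{\phi}\exp\!\left\{\frac{\ln^{2}r}{2\ln(1/q)}+\beta\ln r\right\}
\]
for some integer $\beta$ depending on $\phi$, and this weaker estimate already suffices for the only place the paper invokes the lemma (the proof of Proposition~\ref{prop1}), since there one divides $\ln M(q^{-n};\phi)$ by $n^{2}$ and the linear term in $\ln r$ disappears in the limit.
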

\begin{proof}[Proof of Propositon \ref{prop1}]
Assume that there exists a $q$-density $g\not\sim f$ such that $m_q(n;f)=m_q(n;g)$ for all $n\in{\mathbb N}_0,$ that is,
$$
\sum_{j\in{\mathbb Z}} f(q^{-j}) q^{-nj}=\sum_{j\in{\mathbb Z}} g(q^{-j}) q^{-nj}, \quad n\in{\mathbb N}.
$$
The existence of the $q$-moments implies that the Laurent series $\sum_{j\in{\mathbb Z}} f(q^{-j})z^j$ and $\sum_{j\in{\mathbb Z}} g(q^{-j})z^j$ converge in ${\mathbb C}^*={\mathbb C}\setminus \{0\}$ to $\phi_1(z)$ and $\phi_2(z),$ respectively, both of which are analytic in ${\mathbb C}^*.$
Then, by Lemma \ref{lem1}, for $\phi=\phi_1-\phi_2,$ one has:
\begin{align}
M(r;\phi) \geqslant C \exp\left\{ \frac{\ln^2 r}{2\ln(1/q)}-\frac{\ln r}{2}\right\}, \quad r={q^{-n}}. \label{mrphi}
\end{align}
On the other hand, for $r=q^{-n},$
\begin{align*}
M(r;\phi) \leqslant \frac{2 m_q(n-1;f)}{1-q}, \quad n\in{\mathbb N}. 
\end{align*}
Hence,
\begin{align}
\limsup_{n\to\infty}\frac{\ln M(r;\phi)}{n^2} \leqslant
\limsup_{n\to\infty}\frac{\ln m_q(n;f)}{n^2} < \frac{\ln(1/q)}{2}, \label{less} 
\end{align}
due to the assumption \eqref{A}. Meanwhile, \eqref{mrphi} yields
\begin{align*}
\limsup_{n\to\infty} \frac{\ln M(q^{-n};\phi)}{n^2} \geqslant \limsup_{n\to\infty}
\frac{n^2\ln(1/q)-n\ln(1/q)}{2n^2} = \frac{\ln(1/q)}{2}
\end{align*}
which contradicts \eqref{less}.
\end{proof}
\begin{example}\label{exABC}
Consider $r$-stage Erlang distribution of the first kind with parameter $\lambda$ whose density is given by \eqref{erl}. Applying the conditions \eqref{B} and \eqref{C}, one can derive that if $q^r (1-q)\lambda \leqslant 1,$ then the distribution is $q$-moment indeterminate, and if $q^r (1-q)\lambda > 1,$ then the distribution is $q$-moment determinate. Consequently, for every $\lambda >0,$ the distribution becomes $q$-moment indeterminate when the number of stages is large enough.
\end{example}
The proof of Theorem \ref{thm2} is based on the following result of V. Boicuk and A. Eremenko \cite[Theorem 3]{boer}.
\begin{theorem}$\cite{boer}$ \label{thmbe}
Let $f(z)=\sum_{k=0}^\infty c_kz^k$ be an entire function such that $|c_{k-1}c_{k+1}|\leqslant |c_k|^2$ and 
\begin{align*}
\limsup_{r\to\infty} \frac{\ln M(r;f)}{\ln^2r} = \beta < \infty.
\end{align*}
Then, $|c_k|\geqslant \exp\left\{-k^2/(4\beta)\right\}.$
\end{theorem}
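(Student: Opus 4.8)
We may assume $\beta>0$: if $\beta=0$ the asserted lower bound is $0$ and there is nothing to prove, and if some $c_k$ vanishes then $|c_{k-1}c_{k+1}|\le 0$ propagates to a whole tail of zero coefficients, so $f$ is a polynomial and $\beta=0$. Hence suppose $c_k\neq 0$ for all $k$ and set $h(k):=-\ln|c_k|$; the hypothesis $|c_{k-1}c_{k+1}|\le|c_k|^2$ says precisely that $h$ is convex on ${\mathbb N}_0$ (extend it to $[0,\infty)$ by linear interpolation, which keeps it convex and continuous), and $h(k)/k\to+\infty$ since $f$ is entire.

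The plan is to route both the hypothesis and the conclusion through the convex-conjugate (Legendre) correspondence between the coefficients and the maximal term $\mu(r;f):=\max_{k\ge0}|c_k|r^k$. On one hand, from
\begin{align*}
\mu(r;f)\le M(r;f)\le\sum_{k\ge0}|c_k|r^k\le\frac{e}{e-1}\,\mu(er;f)
\end{align*}
together with the fact that $\ln M(r;f)=O(\ln^2 r)$ forces the central index of $f$ to be $O(\ln r)$, one gets $\ln M(r;f)=\ln\mu(r;f)+O(\ln r)$, whence $\limsup_{s\to\infty}\ln\mu(e^s;f)/s^2=\beta$. On the other hand, directly from the definition,
\begin{align*}
\ln\mu(e^s;f)=\sup_{k\ge0}\bigl(ks-h(k)\bigr)=h^{*}(s),
\end{align*}
the convex conjugate of $h$. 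Thus $h^{*}$ is a convex function on $[0,\infty)$ with $\limsup_{s\to\infty}h^{*}(s)/s^2=\beta$, while — since $h$ is convex and $h=h^{**}$ — the conclusion $|c_k|\ge\exp\{-k^2/(4\beta)\}$ is equivalent to $h(k)=\sup_{s\ge0}\bigl(ks-h^{*}(s)\bigr)\le k^2/(4\beta)$, i.e. to the statement that $h^{*}$ lies on or above the tangent line to the parabola $s\mapsto\beta s^2$ at the point $s=k/(2\beta)$.

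It remains to derive $h(k)\le k^2/(4\beta)$ from the growth of $h^{*}$. Along a sequence $s_n\to\infty$ realizing the $\limsup$ we have $h^{*}(s_n)\ge(\beta-o(1))s_n^2$; combining this with the convexity of $h^{*}$ — which propagates the largeness of $h^{*}$ to neighbourhoods of the $s_n$ and, via its supporting lines, controls $h^{*}$ on the intervening gaps — one evaluates $\sup_s(ks-h^{*}(s))$ at an optimal $s=s(k)\approx k/(2\beta)$ and obtains $h(k)\le ks(k)-(\beta-o(1))s(k)^2\le \frac{k^2}{4\beta}(1+o(1))$. I expect the passage from this to the clean stated inequality to be the main obstacle: a $\limsup$ hypothesis supplies a lower bound on $M(r;f)$ only along a possibly sparse set of radii, and turning it into a pointwise coefficient bound with the sharp constant $1/(4\beta)$ is delicate; it is exactly the log-concavity of $\{|c_k|\}$ that provides the regularity making the correspondence $\{|c_k|\}\leftrightarrow\ln\mu(r;f)$ tight enough to push the estimate through. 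This is the technical core of the argument of Boicuk and Eremenko in \cite[Theorem 3]{boer}.
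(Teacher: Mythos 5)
This statement is not proved in the paper at all: it is imported verbatim from Boicuk--Eremenko (cited as \cite{boer}) and used as a black box in the proof of Theorem \ref{thm2}, so there is no in-paper argument to match your proposal against. Judged on its own terms, your proposal is also not a proof. The part you do carry out --- $\ln\mu(e^{s};f)=h^{*}(s)$ with $h(k)=-\ln|c_k|$ convex, $\mu(r)\leqslant M(r)\leqslant 2\mu(2r)$ so that the $\limsup$ of $\ln M$ and of $\ln\mu$ on the $\ln^{2}r$ scale coincide, and $h=h^{**}$ turning the desired inequality into a statement about $h^{*}$ --- is routine bookkeeping. The decisive step, converting a $\limsup$ growth hypothesis (largeness of $M(r;f)$ only along a possibly sparse sequence of radii) into a pointwise lower bound on every coefficient with the constant $1/(4\beta)$, is exactly the step you defer to ``the technical core of the argument of Boicuk and Eremenko.'' That is the whole theorem; a sketch that outsources it has a genuine gap.

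Moreover, the gap cannot be closed along the route you indicate, because log-concavity does not supply the regularity you invoke. Take $\gamma=1/(4\beta)$, a rapidly increasing sequence of integers $k_n$ (say $k_{n+1}=k_n^{3}$), let $h$ be the piecewise linear interpolation of the values $\gamma k_n^{2}$ at the nodes $k_n$, and set $c_k=e^{-h(k)}$. Then $h$ is convex, so $\{|c_k|\}$ is log-concave; $h(k)/k\to\infty$, so $f$ is entire; since $h(k)\geqslant\gamma k^{2}$ one gets $\ln M(e^{s};f)\leqslant\beta s^{2}+O(s)$, while at $s_n=k_n/(2\beta)$ one has $\ln M(e^{s_n};f)\geqslant k_ns_n-h(k_n)=\beta s_n^{2}$, so $\limsup_{r\to\infty}\ln M(r;f)/\ln^{2}r=\beta$ exactly. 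Yet for $k$ in the middle of a long gap, $h(k)\approx k^{2}/(2\beta)$, so $|c_k|$ is far below $\exp\{-k^{2}/(4\beta)\}$. (A cheaper remark in the same direction: the hypotheses are unchanged if $f$ is multiplied by a small constant, while the conclusion at $k=0$ reads $|c_0|\geqslant 1$; so the literal, constant-free inequality cannot follow from these hypotheses, and the usable content must be an asymptotic bound under growth information valid for \emph{all} large $r$, not just a $\limsup$.) So your concluding assertion that the log-concavity of $\{|c_k|\}$ ``provides the regularity making the correspondence tight enough'' is precisely what fails in this example; whatever the exact hypotheses of \cite{boer} are, they are doing work your sketch does not capture. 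A further small error: a vanishing coefficient does not force $f$ to be a polynomial --- a block of two or more consecutive zero coefficients is perfectly compatible with $|c_{k-1}c_{k+1}|\leqslant|c_k|^{2}$ --- though this is minor compared with the main issue.
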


\begin{proof}[Proof of Theorem \ref{thm2}]
To prove the statement, it suffices to show that, under the conditions \eqref{beta} and \eqref{lc}, the density $f$ satisfies the condition \eqref{C}.
Consider
\begin{align*}
\psi(z)=\sum_{j\in{\mathbb Z}} f(q^{-j})z^j = \sum_{j=-\infty}^{-1} f(q^{-j})z^j+\sum_{j=0}^\infty f(q^{-j})z^j=:\psi_1(z)+\psi_2(z).
\end{align*}
Here, $\psi_1$ is a function analytic at $\infty$ with $\psi_1(\infty)=0,$ whence $M(r;\psi_1)\to 0$ as $r\to\infty.$ Consequently,
\begin{align*}
\limsup_{r\to\infty} \frac{\ln M(r;\psi)}{\ln^2 r} = \limsup_{r\to\infty} \frac{\ln M(r;\psi_2)}{\ln^2r}.
\end{align*} 
For a $q$-density $f,$ one has $m_q(n;f)=(1-q)\psi(q^{-(n+1)})=(1-q)M(q^{-(n+1)};\psi)$ for all $n\in{\mathbb N}.$ Therefore,
\begin{multline*}
\limsup_{r\to\infty} \frac{\ln M(r;\psi_2)}{\ln^2 r} = \limsup_{r\to\infty} \frac{\ln M(r;\psi)}{\ln^2 r} = \limsup_{n\to\infty} \frac{\ln M(q^{-(n+1)};\psi)}{n^2\ln^2(1/q)}
\\=\limsup_{n\to\infty} \frac{\ln m_q(n;f)-\ln(1-q)}{n^2 \ln^2(1/q)} = \frac{A}{\ln^2(1/q)}< \infty.
\end{multline*} 
Thus, applying Theorem \ref{thmbe} to $\psi_2$ implies with $\beta=A/\ln^2(1/q)$ that 
\begin{align*}
f(q^{-j}) \geqslant \exp\left\{-\frac{j^2}{4\beta}\right\}=\exp\left\{-\frac{j^2\ln^2(1/q)}{4A}\right\} \geqslant
\exp\left\{-\frac{j^2\ln(1/q)}{2}\right\} = q^{j^2/2}
\end{align*}
due to \eqref{beta}. Since the condition \eqref{C} is satisfied, one derives the statement. 
\end{proof}
\begin{proof}[Proof of Theorem \ref{thm3}] Consider the entire function $\phi_m(z)=\prod_{j=1}^\infty (1-q^{mj}z)$ for which it is clear that $\phi_m(q^{-m(k+1)})=0$ for all $k\in{\mathbb N}_0.$
By Euler's identity \eqref{eu}
$$
\phi_m(z)=\sum_{j=0}^\infty \frac{(-1)^j q^{mj(j+1)/2}}{(q^m;q^m)_j}\, z^j
$$
which gives 
\begin{align}
\sum_{j=0}^\infty \frac{(-1)^j q^{mj(j+1)/2}}{(q^m;q^m)_j}\, q^{-m(n+1)j}=0, \quad n\in{\mathbb N}_0. \label{qmk}
\end{align}
Now, let $g$ be a $q$-density such that
\begin{align*}
g(q^{-mj})=f(q^{-mj})+\alpha \frac{(-1)^j q^{mj(j+1)/2}}{(q^m;q^m)_j}, \quad j\in{\mathbb N}_0
\end{align*}
and $g(q^{-j})=f(q^{-j})$ otherwise.
Note that, by condition \eqref{mj}, $\alpha>0$ can be chosen in such a way that $g(q^{-j})\geqslant 0$ for all $j\in{\mathbb Z}.$ Also, with the help of \eqref{qmk}, one derives
\begin{align*}
m_q(n;g)&=(1-q)\sum_{j\in{\mathbb Z}} g(q^{-j}) q^{-j(n+1)} \\
&=(1-q)\sum_{j=0}^{\infty} g(q^{-mj}) q^{-mj(n+1)}+(1-q)\sum_{j\in{\mathbb Z}, m\nmid j} g(q^{-j}) q^{-j(n+1)} \\
&=(1-q)\sum_{j=0}^{\infty} f(q^{-mj}) q^{-mj(n+1)}+\alpha(1-q)\sum_{j=0}^{\infty}\frac{(-1)^j q^{mj(j+1)/2}}{(q^m;q^m)_j}\, q^{-m(n+1)j}\\ & \qquad +(1-q)\sum_{m\nmid j} g(q^{-j}) q^{-j(n+1)} \\
&=(1-q)\sum_{j=0}^{\infty} f(q^{-mj}) q^{-mj(n+1)}+(1-q)\sum_{j\in{\mathbb Z}, m\nmid j} f(q^{-j}) q^{-j(n+1)} \\
&=(1-q)\sum_{j\in{\mathbb Z}} f(q^{-j}) q^{-j(n+1)}=m_q(n;f).
\end{align*}
Thus, $g \not\sim f$ while $m_q(n;g)=m_q(n;f)$ for all $n\in{\mathbb N}_0,$ which means that $P_X$ is $q$-moment indeterminate.
\end{proof}
Note that the result cannot be derived from Theorem \ref{thm2}, although 
$$\limsup_{n\to\infty} \frac{\ln m_q(n;f)}{n^2} \geqslant \frac{m}{2}\, \ln(1/q).$$

\begin{proof}[Proof of Proposition \ref{propcon}]  Let $\rho(t)$ be a probability density of $P_X.$ Given $q\in (0,1),$  one may write:
\begin{align*}
\mu_n = \int_{0}^\infty t^n \rho(t) dt = \sum_{j\in \mathbb{Z}} \int_{q^{j}}^{q^{j-1}} t^n\rho(t)dt 
\geqslant \sum_{j\in\mathbb{Z}} q^{jn} \left[ F_X(q^{j-1})-F_X(q^{j})\right].\\
\end{align*}
 To estimate the $q$-moments of $X$, recall that if $f$ the $q$-density  of $X$, then by definition \eqref{qmom}:
\begin{align*}m_q(n;X)=\int_0^\infty t^n f(t)d_qt.\end{align*} With the help of \eqref{qimpr}, one obtains:
\begin{align*}
m_q(n;X)=(1-q)\sum_{j\in\mathbb{Z}}q^{j(n+1)}f(q^j).
\end{align*}
 As $f$ is the $q$-derivative of $F_X,$  we obtain by virtue of \eqref{qder} that
\begin{align*}f(q^{j})=\frac{F_X(q^{j})-F_X(q^{j+1})}{q^{j}(1-q)}. \end{align*}
Hence,
$$ m_q(n;X)= \sum_{j\in\mathbb{Z}} q^{jn}\left[F_X(q^{j})-F_X(q^{j+1})\right]\leqslant 
q^{-n}\mu_n. $$
Therefore, by assumption \eqref{q1},
$$
\limsup_{n\to\infty} \frac{\ln m_q(n;X)}{n^2} \leqslant \limsup_{n\to\infty} \frac{n\ln(1/q)+\ln \mu_n }{n^2}\leqslant 
\frac{\ln(1/q_0)}{2}.
$$
By Proposition \ref{prop1}, $P_X$ is $q$-moment determinate whenever $q< q_0.$

\end{proof}
\begin{example} \label{hyper}
Let $f(t)$ be a density of a hyper-exponential distribution with parameters $\alpha,$ $\beta,$ $\gamma >0,$ that is,
$$
f(t)=\frac{\gamma \beta^{-\alpha/\gamma}}{\Gamma(\alpha/\gamma)}\, t^{\alpha-1} \exp\left(-x^{\gamma}/\beta\right), \quad t>0.
$$
It is known - see \cite[Section 11.4]{counter} - that the moments of this distribution are
$$
\mu_n=\frac{\beta^{n/\gamma}}{\Gamma(\alpha/\gamma)} \Gamma\left(\frac{n+\alpha}{\gamma}\right)
$$
and that for $\gamma\in(0, \frac12)$ the distribution is moment indeterminate and for $\gamma\in[\frac12, \infty)$ it is moment determinate. Since
$$
\limsup_{n\to\infty}\frac{\ln \mu_n}{n^2}=0,
$$
we conclude by Proposition \ref{propcon} that $X$ is $q$-moment determinate for all $q\in(0,1)$ regardless of parameter values.
\end{example}

\begin{proof}[Proof of Proposition \ref{propqexp}]
Let $F_X(t)=1-e_q(-\lambda t),$ $t\geqslant 0,$ whence the $q$-density of $F_X$ is $f(t)=\lambda e_q(-\lambda t).$ Meanwhile, the density of $F_X$ is 
\begin{align*}
\rho(t)&=\frac{d}{dt}\, F_X(t) = -e_q(-\lambda t) \frac{d}{dt} [\ln e_q(-\lambda t)]\\
&=-e_q(-\lambda t) \frac{d}{dt} \sum_{j=0}^{\infty} \ln \frac{1}{1+\lambda (1-q) q^j t}\\
&=\lambda (1-q) e_q(-\lambda t) \sum_{j=0}^{\infty} \frac{q^j}{1+\lambda (1-q) q^j t}=:\lambda (1-q) e_q(-\lambda t) S(t).
\end{align*}
To apply the Krein condition \cite[Section 11, p.101]{counter}, $\rho(t^2)$ will be estimated. By \eqref{zeng},
\begin{align*}
\frac{1}{e_q(-\lambda t^2)} = \prod_{j=0}^{\infty} \left(1+q^j\lambda(1-q)t^2\right) 
\leqslant C \exp\left\{\frac{2\ln^2 t}{\ln(1/q)}[1+\omega(t)]\right\},
\end{align*}
where $\omega(t)=o(1)$ as $t\to\infty.$ To estimate $S(t),$ consider $$\int_0^1 \frac{du}{1+au}=\frac{\ln(1+a)}{a}$$
and observe that, for $a>0,$
$$
\int_0^1 \frac{du}{1+au} \leqslant \sum_{j=0}^\infty \frac{q^j(1-q)}{1+aq^{j+1}} = \frac{1-q}{q} \left\{S(a)-\frac{1}{1+a}\right\},
$$
implying that 
$$
S(a) \geq \frac{1}{1+a}+\frac{q}{1-q} \frac{\ln(1+a)}{a} \geqslant C \frac{\ln(1+a)}{a}.
$$
Consequently,
\begin{align*}
\frac{1}{S(\lambda (1-q)t^2)} \leqslant C \frac{\lambda(1-q)t^2}{\ln(1+\lambda (1-q) t^2)}
\end{align*}
and
\begin{align*}
\frac{1}{\rho(t^2)} \leqslant C \frac{t^2}{\ln(1+\lambda (1-q) t^2)} \exp\left\{\frac{2\ln^2 t}{\ln(1/q)}[1+\omega(t)]\right\}
\end{align*}
yielding
\begin{align*}
-\ln \rho(t^2) \leqslant C \ln^2 t, \quad t \geqslant t_0.
\end{align*}
Therefore, the Krein integral
$$
\int_{t_0}^{\infty} \frac{-\ln \rho(t^2)}{1+t^2} \, dt < \infty
$$
and, by Krein's condition, the distribution $P_X$ is moment indeterminate for all $\lambda > 0$ and $0<q<1.$
\end{proof}

\section{Acknowledgements}The authors would like to extend their sincere gratitude to Prof. Alexandre Eremenko from Purdue University, USA for his valuable comments during the work on this paper.  Also, appreciations go to Mr. P. Danesh from the Atilim University Academic Writing and Advisory Centre for his help in the preparation of the manuscript.

\end{document}